\theoremstyle{plain}
    \newtheorem{thm}{Theorem}[section]
    \newtheorem{theorem}[thm]{Theorem}
\theoremstyle{definition}
\theoremstyle{remark}
\title[Backtracking Gradient Descent allowing unbounded learning rates]{Backtracking Gradient Descent allowing unbounded learning rates}
\author{Tuyen Trung Truong}
\address{Matematikk Institut, Universitetet i Oslo, Blindern, 0851 Oslo, Norway}
\email{tuyentt@math.uio.no}
\thanks{}
\date{\today}
\begin{document}
\begin{abstract}
In unconstrained optimisation on an Euclidean space, to prove convergence in Gradient Descent processes (GD) $x_{n+1}=x_n-\delta _n \nabla f(x_n)$ it usually is required that the learning rates $\delta _n$'s are bounded: $\delta _n\leq \delta $ for some positive $\delta $. Under this assumption, if the sequence $x_n$ converges to a critical point $z$, then with large values of $n$ the update will be small because $||x_{n+1}-x_n||\lesssim ||\nabla f(x_n)||$. This may also force the sequence to converge to a bad minimum. If we can allow, at least theoretically, that the learning rates $\delta _n$'s are not bounded, then we may have better convergence to better minima. 

A previous joint paper by the author showed convergence for  the usual version of Backtracking GD under very general assumptions on the cost function $f$. In this paper, we allow the learning rates $\delta _n$ to be unbounded, in the sense that there is a function $h:(0,\infty)\rightarrow (0,\infty )$ such that $\lim _{t\rightarrow 0}th(t)=0$ and $\delta _n\lesssim \max \{h(x_n),\delta \}$ satisfies Armijo's condition for all $n$, and prove convergence under the same assumptions as in the mentioned paper. It will be shown that this growth rate of $h$ is best possible if one wants convergence of the sequence $\{x_n\}$. 

A specific way for choosing $\delta _n$ in a discrete way connects to Two-way Backtracking GD defined in the mentioned paper. We provide some results which either improve or are implicitly contained in those in the mentioned paper and another recent paper on avoidance of saddle points.  
\end{abstract}
\maketitle

%\section{Introduction} 
  
This short note is an addendum to our previous joint work \cite{truong-nguyen}. Hence we will keep it concise and refer the readers to the mentioned paper and references therein for historical details, results and terminologies. 

 We consider the problem of finding minimum of a $C^1$ function $f:\mathbb{R}^k\rightarrow \mathbb{R}$. 

A popular numerical way to solve this problem is to apply Gradient Descent processes: $x_{n+1}=x_n-\delta _n\nabla f(x_n)$, where $\delta _n>0$ (learning rates) are appropriately chosen.  Usually, it is assumed that $\delta _n$'s are bounded, i.e. there is a $\delta >0$ so that $\delta _n\leq \delta $ for all $n$. This may make the process to converge to bad critical points, since when $x_n$ is close to a critical point $z$ the change $x_{n+1}-x_n$ is bounded by $||\nabla f(x_n)||$ which is small. 

The boundedness of learning rates appear in many common versions of GD, including Standard GD (where $\delta _n$'s are independent of $n$), Diminishing Learning rates (when $\delta _n$ is assumed to converge to $0$) and also Backtracking GD. It may be helpful if we can allow learning rates to be unbounded, at least theoretically. In this note we provide one way to do so for Backtracking GD. 

We now recall the definition of Backtracking GD in the most basic form. We choose $0<\alpha ,\beta <1$ and $\delta _0>0$. For each $x\in \mathbb{R}^k$, we choose $\delta (x)$ to be the largest number $\delta$ in the discrete set $\{\beta ^n\delta _0: ~n=0,1,2,\ldots \}$ which satisfies Armijo's condition (see \cite{armijo})
\begin{eqnarray*}
f(x-\delta \nabla f(x))-f(x)\leq -\alpha \delta ||\nabla f(x)||^2. 
\end{eqnarray*}
The corresponding update rule for Backtracking GD is $x_{n+1}=x_n-\delta (x_n)\nabla f(x_n)$. In \cite{truong-nguyen} we proved convergence of Backtracking GD for cost functions $f$ having at most countably many critical points. This class of cost functions includes all Morse functions. Note that for Backtracking GD, the sequence of learning rates $\delta (x_n)$ are bounded from above by $\delta _0$. 

We now introduce a new version of Backtracking GD where we allow the learning rates $\delta (x_n)$ to not be bounded from above. 

{\bf Unbounded Backtracking GD.} Let $f$ be a $C^1$ function. Fix $0<\alpha ,\beta <1$ and $\delta _0 >0$. We choose $\delta (x)$ as in the Backtracking GD procedure. Fix a function $h:(0,\infty )\rightarrow (0,\infty)$ such that $\lim _{t\rightarrow 0}th(t)=0$. We choose $\hat{\delta}(x)$ any function satisfying $\delta (x)\leq \hat{\delta}(x)\leq h(||\nabla f(x)||)$  and Armijo's condition $f(x-\hat{\delta}(x)\nabla f(x))-f(x)\leq -\alpha \hat{\delta}(x)||\nabla f(x)||^2$, for all $x\in \mathbb{R}^k$. Choose a random point $x_0$. The update rule for Unbounded Backtracking GD is as follows: 
\begin{eqnarray*}
x_{n+1}=x_n-\hat{\delta}(x)\nabla f(x). 
\end{eqnarray*}

We have the following result, which is a generalisation of Theorem 2.1 in \cite{truong-nguyen}. 
\begin{theorem} Assume that $f$ is $C^1$, and $\{x_n\}$ is a sequence constructed by the Unbounded Backtracking GD procedure. Then: 

1) Any cluster point of $\{x_n\}$ is  a critical point of $f$. 

2) Either $\lim _{n\rightarrow\infty}f(x_n)=-\infty$ or $\lim _{n\rightarrow\infty}||x_{n+1}-x_n||=0$. 

3) Assume that $f$ has at most countably many critical points. Then either $\lim _{n\rightarrow\infty}||x_n||=\infty$ or $\{x_n\}$ converges to a critical point of $f$. 

4)  More generally, assume that the set of critical points of $f$ has a bounded connected component $A$. Let $B$ be the set of cluster points of $\{x_n\}$. If $B\cap A\not=\emptyset$, then $B$ is connected and $B\subset A$.  
\label{TheoremUnboundedBacktrackingGD}\end{theorem}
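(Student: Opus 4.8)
The plan is to establish the four parts in the order 2), 1), 4), 3), since the later parts rest on the estimates from the earlier ones. Throughout write $t_n=\|\nabla f(x_n)\|$ and record the fundamental descent inequality, which is Armijo's condition for $\hat\delta$: $f(x_{n+1})-f(x_n)\le -\alpha\hat\delta(x_n)t_n^2\le 0$, so $\{f(x_n)\}$ is non-increasing. For 2) I would split into two cases. If $f(x_n)$ is unbounded below it tends to $-\infty$ by monotonicity; otherwise $f(x_n)$ converges and summing the descent inequality gives $\sum_n\hat\delta(x_n)t_n^2<\infty$, hence $\hat\delta(x_n)t_n^2\to 0$. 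The passage from this to $\|x_{n+1}-x_n\|=\hat\delta(x_n)t_n\to 0$ is exactly where the hypothesis $\lim_{t\to0}th(t)=0$ enters: given $\ep>0$, pick $\eta>0$ with $th(t)<\ep$ for $0<t<\eta$; when $t_n<\eta$ the constraint $\hat\delta(x)\le h(\|\nabla f(x)\|)$ yields $\hat\delta(x_n)t_n\le t_nh(t_n)<\ep$, while when $t_n\ge\eta$ one has $\hat\delta(x_n)t_n\le\hat\delta(x_n)t_n^2/\eta<\ep$ for large $n$. Hence $\|x_{n+1}-x_n\|<\ep$ eventually.

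For 1), let $x_{n_j}\to z$. Then $f(x_{n_j})\to f(z)$, so by monotonicity $f(x_n)$ is bounded below and, by 2), $\hat\delta(x_n)t_n^2\to 0$. If $\nabla f(z)\ne 0$ then $t_{n_j}\to\|\nabla f(z)\|>0$, forcing $\delta(x_{n_j})\le\hat\delta(x_{n_j})\to 0$. The contradiction comes from a uniform lower bound on the backtracking rate: expanding $s\mapsto f(x-s\delta\nabla f(x))$ to first order and using uniform continuity of $\nabla f$ on a compact neighbourhood of $z$ where $\|\nabla f\|$ is bounded away from $0$, Armijo's condition holds for every $\delta$ below a threshold $\bar\delta>0$ that is uniform on that neighbourhood. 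Consequently $\delta(x)\ge\min\{\delta_0,\beta\bar\delta\}>0$ there, contradicting $\delta(x_{n_j})\to 0$.

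For 4) I would first isolate $A$: the critical set $\mathcal{C}$ is closed and $A$ is a compact connected component, so (working inside a large ball to gain compactness) separation of components of a compact set produces a bounded open $U$ with $A\subseteq U$, $\overline U\cap\mathcal{C}=A$ and $\partial U\cap\mathcal{C}=\emptyset$; fix open $V$ with $A\subseteq V\subseteq\overline V\subseteq U$. Since $B\cap A\ne\emptyset$ the sequence clusters in $A$, so as in 1) $f(x_n)$ is bounded below and $\|x_{n+1}-x_n\|\to 0$. The annulus $\overline U\setminus V$ is compact and free of critical points, so there $\|\nabla f\|\ge c>0$ and, by the threshold argument of 1), $\hat\delta(x)\ge\delta_*>0$; thus each visit $x_n\in\overline U\setminus V$ forces a fixed decrease $f(x_n)-f(x_{n+1})\ge\alpha\delta_* c^2$. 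As $f(x_n)$ converges this occurs finitely often, so eventually $x_n\in V\cup(\mathbb{R}^k\setminus\overline U)$; since $\mathrm{dist}(\overline V,\mathbb{R}^k\setminus U)>0$ and the steps shrink, the tail cannot jump from $V$ to $\mathbb{R}^k\setminus\overline U$, and as it clusters in $A\subseteq V$ it eventually lies in $V$. Shrinking $V$ over all neighbourhoods of $A$ gives $B\subseteq A$, and the eventually bounded sequence with vanishing steps has connected cluster set by Ostrowski's lemma, so $B$ is connected. Part 3) is then the special case where $\mathcal{C}$ is at most countable: its connected components are singletons, so a cluster point $z$ (which exists unless $\|x_n\|\to\infty$) is critical by 1) and gives a bounded component $A=\{z\}$ meeting $B$; by 4), $B=\{z\}$, and eventual boundedness yields $x_n\to z$.

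I expect the main obstacle to be the uniform lower bound on $\delta(x)$ over regions where $\nabla f$ stays away from $0$, used in both 1) and 4); this is the one genuinely analytic ingredient, and it is the mechanism that both identifies cluster points as critical and traps the sequence near $A$. The new feature relative to \cite{truong-nguyen} is the case split in 2), where $\lim_{t\to0}th(t)=0$ is precisely what controls the step length in the regime in which the learning rate is allowed to be large while the gradient is small.
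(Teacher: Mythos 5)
Your parts 1) and 2) are correct and essentially reproduce the paper's own argument: part 2) is verbatim the split into the regimes $\|\nabla f(x_n)\|\leq \eta$ (where $\lim_{t\to 0}th(t)=0$ controls the step) and $\|\nabla f(x_n)\|>\eta$ (where $\hat\delta(x_n)\|\nabla f(x_n)\|^2\to 0$ does), and part 1) spells out the uniform lower bound $\inf_K\delta(x)>0$ on compact sets where $\|\nabla f\|$ is bounded away from zero, which the paper delegates to \cite{truong-nguyen} and \cite{bertsekas}. For 3) and 4) you depart from the paper, which compactifies via $\mathbb{P}^k$ and invokes the Asic--Adamovic connectedness result; you instead work directly in $\mathbb{R}^k$ with a trapping neighbourhood of $A$.

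That direct route has a genuine gap at its first step: you cannot in general choose a bounded open $U\supseteq A$ with $\overline{U}\cap\mathcal{C}=A$, so the ``annulus'' $\overline{U}\setminus V$ need not be free of critical points and the fixed-decrease-per-visit argument collapses. Concretely, take $f:\mathbb{R}\to\mathbb{R}$ with $f'(x)=\mathrm{dist}\bigl(x,\{0\}\cup\{1/n:n\geq 1\}\bigr)^2$; this is $C^1$, its critical set is $\mathcal{C}=\{0\}\cup\{1/n:n\geq 1\}$, and $A=\{0\}$ is a bounded connected component, yet every neighbourhood of $A$ contains infinitely many other critical points, so $\|\nabla f\|$ is not bounded below on any annulus around $A$. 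Since $\mathcal{C}$ here is countable, the same gap infects your derivation of 3) from 4). The argument is repairable: the separation theorem for components of compact sets does give a bounded open $W\supseteq A$ with only $\partial W\cap\mathcal{C}=\emptyset$; then $\|\nabla f\|\geq c>0$ on a compact $\rho$-thickening $N$ of $\partial W$, each passage of the tail between $\{x\in W:\mathrm{dist}(x,\partial W)>\rho\}$ and the exterior must (once steps are short) place some $x_n$ in $N$ at cost $f(x_n)-f(x_{n+1})\geq\alpha\delta_*c^2$, so there are finitely many crossings and the tail is trapped and bounded; Ostrowski then gives $B$ connected, and $B\subseteq A$ follows not from your annulus but from maximality: $B\cup A$ is a connected subset of $\mathcal{C}$ containing the component $A$, hence equals $A$. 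With that correction your approach is a valid, self-contained alternative to the paper's compactification.
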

\begin{proof}
1) Let $K$ be a compact set for which $\inf _{x\in K}||\nabla f(x)||>0$. Then $\inf _{x\in K}\hat{\delta}(x)\geq \inf _{x\in K}\delta (x)>0$, the latter can be shown as in \cite{truong-nguyen}. Having this property, we can prove as in \cite{truong-nguyen} (see \cite{bertsekas}) that any cluster point of $\{x_n\}$ is a critical point of $f$. 

2) By Armijo's condition we have
\begin{eqnarray*}
f(x_{n+1})-f(x_n)\leq -\alpha \hat{\delta}(x_n)||\nabla f(x_n)||^2
\end{eqnarray*}
for all $n$. Hence either $\lim _{n\rightarrow\infty}f(x_n)=-\infty$ or $\lim _{n\rightarrow\infty}f(x_n)$ exists as a finite number. In that case, summing over $n$, we obtain the well-known estimate (see \cite{poljak-tsypkin}):
\begin{eqnarray*}
\sum _n\hat{\delta} (x_n)||\nabla f(x_n)||^2<\infty. 
 \end{eqnarray*}
 In particular, $\lim _{n\rightarrow \infty}\hat{\delta} (x_n)||\nabla f(x_n)||^2=0$. 
 
 For any $\epsilon >0$, we consider 2 sets: $C_1(\epsilon )=\{n\in \mathbb{N}: ~||\nabla f(x_n)||\leq \epsilon \}$ and $C_2(\epsilon )=\{n\in \mathbb{N}: ~||\nabla f(x_n)||>\epsilon \}$.  For $n\in C_1(\epsilon )$, using the assumption that $\lim _{t\rightarrow\infty}th(t)=0$ and that $\hat{\delta} (x_n)\leq h(||\nabla f(x_n)||)$, we obtain that
 \begin{eqnarray*}
 ||x_{n+1}-x_n||=\hat{\delta}(x_n)||\nabla f(x_n)||\leq h(||\nabla f(x_n)||) ||\nabla f(x_n)||,
 \end{eqnarray*}
must be small when $\epsilon $ is small. 

For $n\in C_2(\epsilon )$, we have
\begin{eqnarray*}
||x_{n+1}-x_n||=\hat{\delta}(x_n)||\nabla f(x_n)||\leq \hat{\delta}(x_n)||\nabla f(x_n)||^2/\epsilon, 
\end{eqnarray*}
which - for a fixed $\epsilon >0$ - must be small when $n$ large enough, because $\lim _{n\rightarrow\infty}\hat{\delta} (x_n)||\nabla f(x_n)||^2=0$. 

Combining these estimates, we obtain: $\lim _{n\rightarrow\infty}||x_{n+1}-x_n||=0$. 

3) and 4) follows from 1) and 2) by using the real projective space $\mathbb{P}^k$ as in \cite{truong-nguyen}, by using a result on convergence in compact metric spaces in \cite{asic-adamovic}. 
\end{proof}

{\bf Remarks.} The statement of 4) here is equivalent to that of Theorem 2.1 in \cite{truong-nguyen}, but stated in a form more convenient to apply. 

If we assume that the sequence $\{x_n\}$ converges, then $\lim _{n\rightarrow\infty}||x_{n+1}-x_n||=0$, and hence $\lim _{n\rightarrow\infty}\delta _n||\nabla f(x_n)||=0$. Thus the growth rate $\lim _{t\rightarrow 0}th(t)=0$ which we require is best possible if we want the process to converge. (On the other hand, if one chooses $\delta _n$ too small, so to make the condition $\lim _{n\rightarrow\infty}\delta _n||\nabla f(x_n)||=0$, then the limit point - if exists - may not be a critical point.)

If $x_n$ is near a critical point $z$ where the gradient is very flat, for example $\nabla f$ is Lipschitz continuous near $z$ with a very small Lipschitz constant $L(z)$, then the update $x_{n+1}=x_n-\delta _n\nabla f(x_n)$ is very small when $\delta _n$ is bounded. However, here we can take $\hat{\delta} (x_n)$ in the order of $1/L(z)$, which is big, and make big step and maybe can escape the point $z$ and go to another better critical point. (Of relevance is the Capture Theorem in \cite{bertsekas} which asserts that if $z_0$ is close enough to a local minimum and the learning rates are bounded, then the sequence $z_{n+1}=z_n-\delta _n\nabla f(z_n)$ cannot escape $z_0$.)

We can also prove an Unbounded version of Theorem 2.7 (Inexact Backtracking GD ) in \cite{truong-nguyen}, of Theorems 1.1 and 1.3 (avoidance of saddle points) in \cite{truong}.  Analog  results for Unbounded Backtracking versions of Momentum and NAG are also available. We end this note stating and proving some results which either improve or implicitly contained in those in  \cite{truong-nguyen, truong}. 

One way to make a discrete construction of Unbounded Backtracking GD is as follows. At Step n, we choose $\hat{\delta} (x_n)$ by starting with $\delta  =\delta _0$. If $\delta$ does not satisfy Armijo's condition, then we reduce it by a factor of $\beta$ as in the basic version of Backtracking GD. On the other hand, if $\delta$ does satisfy Armijo's condition, we multiply it by $1/\beta$ while Armijo's condition and $\delta \leq h(||\nabla f(x_n)||)$ are both still satisfied. $\hat{\delta} (x_n)$ is the final value of $\delta$. This construction is similar to Two-way Backtracking GD defined in \cite{truong-nguyen}, where the only differences are that we start with $\delta =\delta (x_{n-1})$, and we bound $\delta $ not by $h(||\nabla f(x_n)||)$ but with $\delta _0$. The following result, which proves convergence of Two-way Backtracking GD under some constraints, was not stated in \cite{truong-nguyen} but the proof follows easily from what presented there, by combining with arguments in \cite{truong}. The theorem is valid under assumptions (the same as those required in Theorem 1.3 in \cite{truong} about avoiding saddle points) including the case where $f$ is in $C^2$ or $C_L^{1,1}$. 

\begin{theorem}
Let $f:\mathbb{R}^k\rightarrow \mathbb{R}$ be a $C^1$ function. Assume that there are continuous functions $r,L:\mathbb{R}^k\rightarrow (0,\infty)$ such that for all $x\in \mathbb{R}^k$, the gradient $\nabla f$ is Lipschitz continuous on $B(x,r(x))$ with Lipschitz constant $L(x)$. Then all conclusions in Theorem \ref{TheoremUnboundedBacktrackingGD} are satisfied for the sequence $\{z_n\}$. 
\label{TheoremTwowayBacktrackingGD}\end{theorem}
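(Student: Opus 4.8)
The plan is to reduce everything to Theorem \ref{TheoremUnboundedBacktrackingGD}: I would verify that the discrete two-way rule described above produces, at each step, a learning rate $\hat{\delta}(z_n)$ that is an admissible choice in the sense of the Unbounded Backtracking GD procedure, i.e.\ that it satisfies Armijo's condition together with the two-sided bound $\delta(z_n)\le \hat{\delta}(z_n)\le h(\|\nabla f(z_n)\|)$. Once this is established, the four conclusions are exactly those of Theorem \ref{TheoremUnboundedBacktrackingGD} applied to $\{z_n\}$, with parts 3) and 4) again obtained through the $\mathbb{P}^k$-compactification together with the convergence criterion of \cite{asic-adamovic}.

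First I would record the elementary descent estimate supplied by the hypothesis. If $\nabla f$ is $L(x)$-Lipschitz on $B(x,r(x))$ and $\delta\|\nabla f(x)\|\le r(x)$, then the segment $[x,x-\delta\nabla f(x)]$ stays in $B(x,r(x))$, and integrating $\nabla f$ along it gives
\begin{eqnarray*}
f(x-\delta\nabla f(x))-f(x)\le -\delta\Big(1-\tfrac{1}{2}L(x)\delta\Big)\|\nabla f(x)\|^2 .
\end{eqnarray*}
Hence Armijo's condition holds whenever $\delta\le \min\{r(x)/\|\nabla f(x)\|,\,2(1-\alpha)/L(x)\}$. This has three consequences I would use: the backtracking (reduction) loop terminates, so $\delta(x)$ is well defined and positive; the multiplication loop terminates because it is capped by $\delta\le h(\|\nabla f(x)\|)<\infty$; and, because $r$ and $L$ are continuous, the threshold above is bounded below on any compact set on which $\|\nabla f\|$ is bounded away from $0$, yielding $\inf_{x\in K}\delta(x)>0$ there. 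The last point is precisely the property invoked in part 1) of Theorem \ref{TheoremUnboundedBacktrackingGD}, now furnished explicitly by the $r,L$ data rather than by the compactness argument of \cite{truong-nguyen}.

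Next I would check the two-sided bound. The lower bound $\hat{\delta}(z_n)\ge \delta(z_n)$ is immediate from the construction: in the branch where $\delta_0$ fails Armijo the procedure returns exactly $\delta(z_n)$, and in the branch where $\delta_0$ already satisfies Armijo it returns a value $\ge\delta_0\ge\delta(z_n)$. The upper bound $\hat{\delta}(z_n)\le h(\|\nabla f(z_n)\|)$ is enforced directly by the cap built into the multiplication loop. With both bounds and Armijo in hand, $\hat{\delta}(z_n)$ meets the hypotheses of the Unbounded Backtracking GD definition, so Theorem \ref{TheoremUnboundedBacktrackingGD} applies and delivers parts 1)--4).

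The step I expect to be the main obstacle is the bookkeeping that keeps $\hat{\delta}(z_n)$ inside the admissible window $[\delta(z_n),\,h(\|\nabla f(z_n)\|)]$ in every case, in particular confirming that the reduction branch respects the cap and that the window is nonempty, which is exactly where the descent estimate, and hence the local Lipschitz hypothesis, is essential. Establishing the locally uniform lower bound $\inf_{x\in K}\delta(x)>0$ from the continuity of $r$ and $L$ is the other place where genuine work is needed; everything after that is a direct appeal to Theorem \ref{TheoremUnboundedBacktrackingGD} and the compact-metric-space convergence result of \cite{asic-adamovic}, exactly as in \cite{truong-nguyen,truong}.
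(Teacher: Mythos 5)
Your key technical ingredient is the right one: the descent estimate showing that Armijo's condition holds automatically whenever $\delta\leq\min\{r(x)/||\nabla f(x)||,\,2(1-\alpha)/L(x)\}$, and hence that the continuity and positivity of $r$ and $L$ force a positive lower bound for the accepted learning rates on any compact set. This is precisely point i) of the paper's proof, which records it as the explicit bound $\delta(z_n)\geq\min\{\beta/L(z_n),\,\beta r(z_n)/||\nabla f(z_n)||,\,\delta_0\}$.

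However, you have applied this machinery to the wrong construction. Theorem \ref{TheoremTwowayBacktrackingGD} concerns Two-way Backtracking GD: at step $n$ the line search starts from the \emph{previous} rate $\delta(z_{n-1})$, not from $\delta_0$, and the increasing loop is capped by $\delta_0$, not by $h(||\nabla f(z_n)||)$. This breaks the sandwich $\delta(z_n)\leq\hat{\delta}(z_n)\leq h(||\nabla f(z_n)||)$ on which your reduction to Theorem \ref{TheoremUnboundedBacktrackingGD} rests: since Armijo's condition is not monotone in the step size, the two-way procedure can stop its increasing loop at a value strictly smaller than the canonical backtracking rate obtained by descending from $\delta_0$ (the loop halts at the first local failure of Armijo above the starting value, which need not be the global one). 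So the lower half of your sandwich, and with it the claim $\inf_{z_n\in K}\hat{\delta}(z_n)>0$, is not justified for the sequence the theorem is actually about. The repair is to bypass the reduction and bound the two-way rate directly: whatever value the procedure returns, either the cap was hit, so the value is at least $\beta\delta_0$, or Armijo fails one step up, in which case your own descent estimate forces the returned value to exceed $\beta\min\{2(1-\alpha)/L(z_n),\,r(z_n)/||\nabla f(z_n)||\}$; together with the trivial upper bound $\delta(z_n)\leq\delta_0$, this yields the two properties (uniform positive lower bound on compacta and uniform upper bound) from which the conclusions follow as in \cite{truong-nguyen}. That is exactly the paper's argument, so the distance from your write-up to a correct proof is small, but the reduction as you have stated it does not go through.
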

\begin{proof}
 As observed in \cite{truong-nguyen}, the key is to prove the following two statements: 

i) If $K\subset \mathbb{R}^k$ is a compact set, then $\inf _{n: ~z_n\in K}\delta (z_n)>0$. This is satisfied under our assumption, since in fact it can be checked that $\delta (z_n)\geq  \min \{\beta /L(z_{n}),\beta r(z_{n})/||\nabla f(z_n)||,\delta _0\}$ and hence
\begin{eqnarray*}
\delta (z_n)\geq \inf _{z\in K} \min \{\beta /L(z),\beta r(z)/||\nabla f(z)||,\delta _0\}
\end{eqnarray*}
for all $n$, and that all functions $r,L,||\nabla f||$ are continuous, and $r,L>0$. Then we have that any cluster point of $\{z_n\}$ is  a critical point of $f$. 

ii) $\sup _{n}\delta (z_n)<\infty$. This is satisfied automatically since by construction $\delta (z_n)\leq \delta _0$ for all $n$. 
\end{proof}

When implementing Backtracking GD in DNN, \cite{truong-nguyen} proposed that we only apply it at several first iterations for each epoch, then after that use the Standard GD, unless when the cost function increases - at that point we apply Backtracking GD once, then repeat Standard GD. The following result, which justifies this practice, was not stated there, but again can easily be proven.

\begin{theorem} Let $f:\mathbb{R}^k\rightarrow \mathbb{R}$ be $C^1$ function. Fix a positive integer $N$. Given an initial point $x_0\in \mathbb{R}^k$. Assume that we construct a sequence $x_{n+1}=x_n-\delta (x_n)\nabla f(x_n)$. Assume that for every $n$, either $\delta (x_n)$ is constructed using Backtracking GD, or $\delta (x_n)=\delta (x_{n-1})$. Assume also that for every $n$, and at least one of $\delta (x_n),\delta (x_{n+1}),\ldots ,\delta (x_{n+N})$ is updated using Backtracking GD.  Moreover, assume that $f(x_{n+1})\leq f(x_n)$ for all $n$.  Then all conclusions of Theorem \ref{TheoremUnboundedBacktrackingGD} are satisfied for the sequence $\{x_n\}$.  
\label{Theorem}\end{theorem}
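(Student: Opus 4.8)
The plan is to reduce everything to the two local properties that drive the proof of Theorem~\ref{TheoremUnboundedBacktrackingGD} in \cite{truong-nguyen}, while paying careful attention to the \emph{reuse} steps (those with $\delta(x_n)=\delta(x_{n-1})$), at which Armijo's condition is \emph{not} assumed to hold. First I would record the combinatorial structure forced by the hypotheses: writing $\mathcal{B}$ for the set of indices at which $\delta$ is produced by Backtracking GD, the window assumption forces consecutive elements of $\mathcal{B}$ to differ by at most $N+1$, so every $n$ admits a most recent Backtracking index $b(n)\le n$ with $n-b(n)\le N$, and throughout the block from $b(n)$ up to the next Backtracking index the learning rate is the constant $\delta(x_{b(n)})$. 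In particular every learning rate, reuse steps included, is bounded above by $\delta_0$.

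Next I would exploit monotonicity. Since $f(x_{n+1})\le f(x_n)$, the limit $\lim_n f(x_n)$ exists in $[-\infty,f(x_0)]$; if it is $-\infty$ then conclusion~2) holds, so I assume it is a finite number $L$. Summing Armijo's inequality only over $m\in\mathcal{B}$ and bounding the partial sums by the total (telescoping, nonnegative) decrease $f(x_0)-L$ yields $\sum_{m\in\mathcal{B}}\delta(x_m)\,\|\nabla f(x_m)\|^2<\infty$, whence $\delta(x_m)\,\|\nabla f(x_m)\|^2\to0$ along $\mathcal{B}$. As in \cite{truong-nguyen}, on any compact $K$ with $\inf_K\|\nabla f\|>0$ the basic Backtracking rate satisfies $\delta(x)\ge\delta_{\min}(K)>0$; this, together with the upper bound $\delta_0$, is the only input the general machinery requires.

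For conclusion~2) I would argue by contradiction: if $\|x_{n_k+1}-x_{n_k}\|\ge\varepsilon_0$ along a subsequence, set $m_k=b(n_k)$, $d_k=\delta(x_{m_k})$, and pass to a further subsequence with $d_k\to\delta^*\in[0,\delta_0]$. If $\delta^*=0$ then $\|x_{n_k+1}-x_{n_k}\|=d_k\|\nabla f(x_{n_k})\|\le d_k\sup_Q\|\nabla f\|\to0$ on a compact $Q$ containing the relevant points; if $\delta^*>0$ then the summability forces $\|\nabla f(x_{m_k})\|\to0$, and since the block has length $\le N$ and $d_k\le\delta_0$, I would propagate this through the at most $N$ reuse steps by $C^1$ (uniform) continuity of $\nabla f$ on $Q$: inductively $\|x_{m_k+j+1}-x_{m_k+j}\|=d_k\|\nabla f(x_{m_k+j})\|\to0$ forces $\nabla f(x_{m_k+j+1})\to0$, so in particular $\|x_{n_k+1}-x_{n_k}\|\to0$, a contradiction. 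With 2) in hand, conclusion~1) follows in the usual way: if a finite cluster point $z$ had $\nabla f(z)\neq0$, pick a compact ball $K$ about $z$ with $\|\nabla f\|\ge c>0$ on $K$; for the infinitely many $n_j$ with $x_{n_j}\to z$, conclusion~2) together with $n_j-b(n_j)\le N$ places $x_{b(n_j)}$ in $K$ as well, so each of the infinitely many distinct indices $b(n_j)\in\mathcal{B}$ produces a decrease $\ge\alpha\,\delta_{\min}(K)\,c^2>0$, contradicting $f(x_n)\to L$. Finally, conclusions~3) and~4) are formal consequences of 1) and 2) through the embedding into $\PP^k$ and the Asic--Adamovic theorem, verbatim as in \cite{truong-nguyen}.

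The main obstacle, and the only place the argument departs from \cite{truong-nguyen}, is precisely the reuse steps: because Armijo's condition is unavailable there, neither the displacement nor the gradient at a reuse index is controlled directly, and one must instead transport the control from the block's single Backtracking index across the (at most $N$) reuse indices using only $C^1$ continuity. The compactness needed for this uniform-continuity propagation, and for the lower bound $\delta_{\min}(K)$, is automatic on bounded pieces of the trajectory and near any finite cluster point; the genuinely unbounded case $\|x_n\|\to\infty$ is absorbed by the same $\PP^k$ compactification used for 3) and 4). A secondary point to get right is the logical order: 2) must be established \emph{before} 1) to avoid circularity, since the proof that cluster points are critical relies on the $\le N$ steps separating $n_j$ from $b(n_j)$ being small.
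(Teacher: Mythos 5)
Your overall strategy is the same as the paper's: use the window hypothesis to attach to every index $n$ a Backtracking index $b(n)$ within distance $N$, use the monotonicity hypothesis $f(x_{n+1})\leq f(x_n)$ to sum Armijo's inequality over the Backtracking indices alone, combine the lower bound $\inf_K\delta>0$ on compacta with non-vanishing gradient with the upper bound $\delta_0$, and transport the resulting control across the at most $N$ reuse steps, at which Armijo's condition is unavailable. The paper compresses this into a single observation (a convergent subsequence along which the gradient does not vanish transfers this property to any index-wise $N$-close subsequence, one of which may be taken among the Backtracking indices); your uniform-continuity propagation of $\nabla f(x_{m_k+j})\to 0$ along a block is a worked-out version of the same mechanism, and your argument for conclusion 1) near a finite cluster point, where everything takes place in a fixed compact ball, is sound and more explicit than what the paper writes.

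There is, however, a genuine gap in your proof of conclusion 2). You argue by contradiction along a subsequence with $||x_{n_k+1}-x_{n_k}||\geq\varepsilon_0$ and then work "on a compact $Q$ containing the relevant points", but nothing forces such a subsequence to be bounded: the $\delta^*=0$ branch needs $\sup_Q||\nabla f||<\infty$ and the $\delta^*>0$ branch needs uniform continuity of $\nabla f$ on $Q$, and both fail without boundedness of $\{x_{m_k},\dots,x_{n_k}\}$. The proposed rescue via the $\mathbb{P}^k$ compactification does not apply here: in \cite{truong-nguyen} the compactification and the Asic--Adamovic result are invoked only \emph{after} $||x_{n+1}-x_n||\to 0$ has been established in the Euclidean metric (it is the input, not the output, of that step), and $\nabla f$ has no continuous extension to the hyperplane at infinity, so no uniform-continuity propagation is available there. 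Concretely, at a reuse index $n$ with $||\nabla f(x_n)||>\epsilon$ one needs $\delta(x_{b(n)})\,||\nabla f(x_n)||^2\to 0$, whereas the Armijo summability only yields $\delta(x_{b(n)})\,||\nabla f(x_{b(n)})||^2\to 0$, and without a compact set containing the block from $b(n)$ to $n$ there is no way to compare $||\nabla f(x_n)||$ with $||\nabla f(x_{b(n)})||$. (The paper's own two-sentence proof is silent on this point as well; its stated observation is aimed at conclusion 1), where the relevant points do lie near a cluster point and compactness is free. Note also that the paper moves \emph{forward} to the next Backtracking index, which keeps the auxiliary points in a compact set automatically, whereas your step from $n_j$ back to $b(n_j)$ is only controlled once 2) is already known.) Your argument does establish conclusion 2) for bounded trajectories, and conclusions 1), 3), 4) as you derive them; but as written the unbounded case of 2) is not covered.
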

\begin{proof}
The proof of the theorem is similar to that of Theorem \ref{TheoremTwowayBacktrackingGD}, by observing that if  a subsequence $\{x_{n_k}\}$ converges and $\lim _{k\rightarrow\infty}||\nabla f(x_{n_k})||>0$, then the same is true for any other subsequence $x_{n'_{k}}$ with $|n_k-n'_{k}|\leq N$ for all $k$. Now we can choose such a subsequence $\{n'_k\}$ so that the update of $\delta (n'_k)$ is given by Backtracking GD, by using the assumptions in the theorem. 
\end{proof}

In \cite{truong}, we proved avoidance of saddle points for some modifications of Backtracking GD. In particular, Theorem 1.3 in that paper concerns a version called Backtracking GD-New, which is constructed using local Lipschitz constants $L(x)$ for $\nabla f$. Part iv) of Theorem 1.3 in \cite{truong} needs the assumption that there is $L_0>0$ so that if $x$ is a non-isolated generalised saddle point of $f$, then the $L(x)\leq L_0$. While this condition is already more general than what used before (requiring that $f$ is in $C^{1,1}_L$), it can actually be removed. In fact, by using the same argument through Lindel\"off theorem, we need only care about $L(z)$ for a countable number of saddle points, and the arguments in \cite{truong} go through.


\begin{thebibliography}{88}

%   I like the following format with minimal punctuation

%\bibitem{absil-mahony-andrews} P.-A. Absil, R. Mahony and B. Andrews, {\it Convergence of the iterates of descent methods for analytic cost functions}, SIAM J. Optim. 16 (2005), vol 16, no 2, 531--547. 

\bibitem{armijo} L. Armijo, {\it Minimization of functions having Lipschitz continuous first partial derivatives}, Pacific J. Math. 16 (1966), no. 1, 1--3. 

\bibitem{asic-adamovic} M. D. Asic and D. D. Adamovic, {\it Limit points of sequences in metric spaces}, The American mathematical monthly, vol 77, so 6 (June--July 1970), 613--616. 

\bibitem{bertsekas} D. P. Bertsekas, {\it Nonlinear programming}, 2nd edition, Athena Scientific, Belmont, Massachusetts, 1999. 

%\bibitem{bottou-etal}{L. Bottou, F. E. Curtis and J. Nocedal,}\textit{ Optimization methods for large-scale machine learning,} SIAM Rev. 60(2), 223--311.

%\bibitem{boyd-vandenberghe} S. Boyd and L. Vandenberghe, {\it Convex optimization}, 7th printing with corrections, Cambridge University Press, 2009. 

%\bibitem{bray-dean} A. J. Bray and and D. S. Dean, {\it Statistics of critical points of gaussian fields on large-dimensional spaces}, Physics Review Letter, 98, 150201. 

%\bibitem{cauchy} A. Cauchy, {\it Method g\'en\'eral pour la r\'esolution des systemes d'\'equations simulan\'ees}, Comptes Rendus 25 (1847), no. 2, 536. 

%\bibitem{cicek-etal} O. Cicek, A. Abdulkadir, S. S. Lienkamp, T. Brox and O. Ronneberger,   {\it 3D U-Net: Learning dense volumetric segmentation from sparse annotation}, MICCAI 2016, pp. 424--432. 

%\bibitem{crockett-chernoff} J. B. Crockett and H. Chernoff, {\it Gradient methods of maximization}, Pacific J. Math. 5 (1955), 33--50.  

%\bibitem{curry} H. B. Curry, {\it The method of steepest descent for non-linear minimization problems}, Quarterly of applied mathematics, 2 (October 1944), no 3, 258--261.  

%\bibitem{dauphin-pascanu-gulcehre-cho-ganguli-bengjo} Y. N. Dauphin, R. Pascanu, C. Gulcehre, K. Cho, S. Ganguli and Y. Bengjo, {\it Identifying and attacking the saddle point problem in high-dimensional non-convex optimization}, NIPS' 14 Proceedings of the 27th International conference on neural information processing systems, Volume 2, pages 2933--2941.  

%\bibitem{Duchi} J. Duchi, E. Hazan, and Y. Singer, {\it Adaptive subgradient methods for online learning and stochastic optimization}, Journal of Machine Learning Research, 12:2121--2159, 2011.	

%\bibitem{eykholt-etal} {K. Eyholt, I. Evtimov, E. Fernades, B. Li, A. Rahmati, C. Xiao, A. Prakash, T. Kohno and D. Song,}  {\it Robust physical-world attacks on Deep Learning visual classification}, arXiv: 1707.08945. Appears at CVPR 2018. 

%\bibitem{finlayson-etal} {S. G. Finlayson, J. D. Bowers, J. Ito, J. L. Zittrain, A. L. Beam and I. S. Kohane,}  {\it Adversarial attacks on medical machine learning}, Science (22 March 2019), Vol 363, Issue 6433, pp. 1287--1289. 

%\bibitem{ge-huang-jin-yuan} R. Ge, F. Huang, C. Jin and Y. Yuan, {\it Escaping from saddle points - online stochastic gradient for tensor decomposition}, JMLR: Workshop and conference proceedings, vol 40: 1--46, 2015. 

%\bibitem{goldstein} A. A. Goldstein, {\it Cauchy's method of minimization}, Numerische Mathematik 4 (1962), 146--150. 

%\bibitem{helmke-moore} {U. Helmke and J. B. Moore,}  {\it Optimization and dynamical systems}, {2nd edition}, 1996, online book.

%\bibitem{He} K. He, X. Zhang, S.Ren and J. Sun, {\it Deep residual learning for image recognition}, CoRR, 2015 , arxiv:1512.03385

%\bibitem{He2} K. He, X. Zhang, S. Ren and J. Sun, {\it Identity mappings in deep residual networks}, CoRR, abs/1603.05027, 2016. 

%\bibitem{Hinton} G. Hinton, N Srivastava, and K. Swersky, {\it Lecture 6a overview of mini-batch gradient descent}, Coursera Lecture slides https://class.coursera.org/neuralnets-2012-001/lecture, 2012.

%\bibitem{Hu}  J. Hu, L. Shen, and G. Sun. {\it Squeeze-and-excitation networks}, CoRR, abs/1709.01507, 2017.

%\bibitem{Huang} G. Huang, Z. Liu, and K. Q. Weinberger. {\it Densely connected convolutional networks}, CoRR, abs/1608.06993, 2016.

%\bibitem{Ioffe2017} S. Ioffe, {\it Batch renormalization: towards reducing minibatch dependence in batch-normalized models}, NIPS, 2017.

%\bibitem{Ioffe2015}  S. Ioffe and C. Szegedy, {\it Batch normalization: accelerating deep network training by reducing internal covariate shift}, Proceedings of the 32Nd International Conference on International Conference on Machine Learning - Volume 37, JMLR.org, 2015 , 448-456.

%\bibitem{Jastrzebski} S. Jastrzebski, Z. Kenton,  D. Arpit,  N. Ballas, A. Fischer, Y. Y. Bengio and A. J. Storkey, {\it Three factors influencing minima in SGD}, CoRR, 2017, arxiv:1711.04623

%\bibitem{lange}{K. Lange,} {\it Optimization}, 2nd edition, Springer texts in statistics, New York 2013.  

%\bibitem{lee-simchowitz-jordan-recht} J. D. Lee, M. Simchowitz, M. I. Jordan and B. Recht, Gradient descent only converges to minimizers, JMRL: Workshop and conference proceedings, vol 49 (2016), 1--12. 

%\bibitem{kawaguchi} K. Kawaguchi, {\it Deep learning without local minima}, part of Advances in Neural information processing system 29 (NIPS, 2016). 

%\bibitem{Kingma} D. P. Kingma and J. Lei Ba, {\it Adam: a method for stochastic optimization}, International Conference on Learning Representations, pages 1--13, 2015.

%\bibitem{Krizhevsky} A. Krizhevsky, {\it One weird trick for parallelizing convolutional neural networks}, arXiv:1404.5997, 2014.

%\bibitem{mahrsereci-hennig} M. Mahrsereci and P. Hennig, {\it Probabilistic line searches for stochastic optimisation}, JMLR, vol 18 (2017), 1--59. 

%\bibitem{Nesterov2} Y. Nesterov, {\it Introductory lectures on convex optimization : a basic course}, 2004, Kluwer Academic Publishers. ISBN 978-1402075537.

%\bibitem{Nesterov} Y. Nesterov, {\it A method for unconstrained convex minimization problem with the rate of convergence o(1/k2)}, Doklady ANSSSR (translated as Soviet.Math.Docl.), 269:543--547.

%\bibitem{nielsen} M. A. Nielsen, {\it Neural networks and deep learning}, Determination Press, 2015. 

%\bibitem{nocedal-wright} J. Nocedal and S. J. Wright, {\it Numerical optimization}, Springer series in operations research, 1999. 

%\bibitem{panageas-piliouras} I. Panageas and G. Piliouras, {\it Gradient descent only converges to minimizers: Non-isolated critical points and invariant regions}, 8th Innovations in theoretical computer science conference (ITCS 2017), Editor: C. H. Papadimitrou, article no 2, pp. 2:1--2:12, Leibniz international proceedings in informatics (LIPICS), Dagstuhl Publishing. Germany. 

%\bibitem{panel} Panel's discussion, {\it Self-driving cars: Pros and cons for the public's health}, Havard T. H. Chan School of Public health, presented jointly with NBC News Digital, 4 May 2018. 

%\bibitem{papernot-etal} N. Papernot, P. McDaniel, I. Goodfellow, S. Jha, Z. B. Celik and A. Swami, {\it Practical black-box attacks against machine learning}, ACM Asia conference on computer and communications security, UAE 2017. 

%\bibitem{pinkus} A. Pinkus, {\it Approximation theory of the MLP model in neural networks}, Acta Numerica (1999), pp. 143--195.

\bibitem{poljak-tsypkin} B. T. Poljak and Y. Z. Tsypkin, {\it Pseudogradient adaptation and training algorithms}, Automation and Remote control, vol 12 (1973), pp. 83--94.

%\bibitem{Qian} N. Qian, {\it On the momentum term in gradient descent learning algorithms}, Neural networks: the official journal of the International Neural Network Society, 12(1):145--151, 1999.

%\bibitem{robbins-monro} H. Robbins and S. Monro, {\it A stochastic approximation method}, Annals of Mathematical Statistics, vol 22, pp. 400--407, 1951. 

%\bibitem{ruder} S. Ruder, {\it An overview of gradient descent optimisation algorithms}, arXiv: 1609.04747. 

%\bibitem{Sandler} M. Sandler, A. Howard, M. Zhu, A. Zhmoginov, and L. Chen, {\it Mobilenetv2: Inverted residuals and linear bottlenecks}, arXiv:1704.04861, 2017.

 %\bibitem{shwartz-david} {S. Shalev-Shwartz and S. Ben-David,}  {\it Understanding machine learning: from theory to algorithms}, Cambridge University Press, 2014.

%\bibitem{Smith} L. N.Smith, {\it No more pesky learning rate guessing games}, CoRR, 2015 , arxiv:1506.01186

%\bibitem{Smith2} L. N. Smith and N. Topin, {\it Super-convergence: very fast training of residual networks using large learning rates}, CoRR, 2017 , abs/1708.07120


%\bibitem{swann} H. W. Swann, {\it A survey of non-linear optimisation techniques}, FEBS letters, Volume 2, supplement 1, March 1969, pages S39--S55. 

%\bibitem{swirszcz-czarnecki-pascanu} G. Swirszcz, W. M. Czarnecki and R. Pascanu, {\it Local minima in training of neural networks}, arXiv: 1611.06310. 

\bibitem{truong} T. T. Truong, {\it Convergence to minima for the continuous version of Backtracking Gradient Descent}, arXiv: 1911.04221; $\&$ {\it Coordinate-wise Armijo's condition}, arXiv: 1911.07820. 

\bibitem{truong-nguyen} T. T. Truong and T. H. Nguyen, {\it Backtracking gradient descent method for general $C^1$ functions with applications to Deep Learning}, arXiv: 1808.05160v2. 

%\bibitem{tu} L. W. Tu , {\it An introduction to manifolds}, 2nd edition, Universitext, North America, Springer. 

%\bibitem{Wilson} A. C. Wilson, R. Roelofs, M. Stern, N. Srebro, and B. Recht, {\it The marginal value of adaptive gradient methods in Machine Learning}, arXiv:1705.08292. 2017.

%\bibitem{wolfe2} P. Wolfe, {\it Convergence conditions for ascent methods II: Some corrections}, SIAM Review 13 (April 1971),  185--188. 

%\bibitem{wolfe} P. Wolfe, {\it Convergence conditions for ascent methods}, SIAM Review 11 (April 1969), no 2, 226--235. 



%\bibitem{wolpert-mcready} D. H. Wolpert and W. G. Macready, {\it No free lunch theorems for optimisation, IEEE Transactions on evolutionary computation}, Vol. 1, No 1, April 1997, 67--82. 

%\bibitem{Zeiler} M. D. Zeiler,  {\it ADADELTA: an adaptive learning rate method}, arXiv:1212.5701.

%\bibitem{Zhang} J. Zhang and I. Mitliagkas, {\it YellowFin and the art of momentum tuning}, arXiv:1706.03471.

%\bibitem{lecun-mnist} Y. LeCun,  C. Cortes and C. Burges, {\it MNIST handwritten digit database}, 2010, http://yann.lecun.com/exdb/mnist/

%\bibitem{fastai} https://www.fast.ai/

\bibitem{mbtoptimizer} https://github.com/hank-nguyen/MBT-optimizer

%\bibitem{pytorchcifar} https://github.com/kuangliu/pytorch-cifar

%\bibitem{lrfinder} https://github.com/davidtvs/pytorch-lr-finder




\end{thebibliography}
\end{document}